\newtheorem{theorem}{Theorem}
\newcommand{\Lip}{\mathrm{Lip}}
\newcommand{\R}{\mathbb R}
\newcommand{\E}{\mathbb E}
\begin{document}   
    \title{A Simple Proof of the Johnson--Lindenstrauss Extension Theorem}  
 \author{Manor Mendel}
 \address{Mathematics \& Computer Science Department, The Open University of Israel}
 \email{manorme@openu.ac.il}
\begin{abstract}
    Johnson and Lindenstrauss proved that any Lipschitz mapping from an $n$-point subset of a metric space into Hilbert space can be extended to the whole space, while increasing the Lipschitz constant by  a factor of $O(\sqrt{\log n})$. 
    We present a simplification of their argument that avoids dimension reduction and the Kirszbraun theorem.
\end{abstract} 

\maketitle

%
%
    
The Lipschitz constant of a mapping $f:T\to Y$ between metric spaces is defined as
$\Lip(f)=\sup_{x\ne y \in T} d_Y(f(x),f(y))/d_T(x,y)$.
The following is a Lipschitz extension theorem due to Johnson and Lindenstrauss~\cite{JL84}.

\begin{theorem} \label{thm:JL}
    Let $T$ be an arbitrary $n$-point metric space, and $X\supset T$ an arbitrary superspace. Let $\mathcal H$ be a Hilbert space and $f:T\to \mathcal H$ a   mapping . Then there exists an extension $F:X\to \mathcal H$ of $f$ such that $F|_T=f$, and $\Lip(F) \lesssim \sqrt{\log n} \cdot \Lip(f)$.
\end{theorem}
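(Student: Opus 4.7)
The plan is to construct the extension $F$ explicitly as an expectation of $f$ against a family of probability measures on $T$, thereby bypassing both the Kirszbraun theorem and the Johnson--Lindenstrauss dimension-reduction lemma. The Hilbertian structure of $\mathcal H$ will enter only through Jensen's inequality, and this single application is precisely what converts a linear-in-$\log n$ transport estimate into the $\sqrt{\log n}$ factor of the theorem.

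Normalize so that $\Lip(f)=1$. The construction assigns to each $x\in X$ a probability measure $\mu_x$ on $T$ meeting the consistency condition $\mu_t=\delta_t$ for every $t\in T$, and sets
\[
F(x):=\int_T f\,d\mu_x = \E_{t\sim \mu_x}[f(t)],
\]
so that $F|_T=f$ is automatic. Given any coupling $\pi_{xy}$ of $\mu_x$ and $\mu_y$ on a common probability space, the Hilbertian Jensen inequality yields
\[
\|F(x)-F(y)\|^{2}
= \bigl\|\E_{\pi_{xy}}[f(\tau)-f(\tau')]\bigr\|^{2}
\le \E_{\pi_{xy}}\bigl\|f(\tau)-f(\tau')\bigr\|^{2}
\le \E_{\pi_{xy}}\bigl[d(\tau,\tau')^{2}\bigr],
\]
so the theorem reduces to producing a coupling realising the quadratic transport bound $\E_{\pi_{xy}}[d(\tau,\tau')^{2}]\lesssim (\log n)\,d(x,y)^{2}$.

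To build the coupling I would use a perturbed-argmin scheme. Draw i.i.d.\ light-tailed random variables $\{Z_t\}_{t\in T}$ (Gumbel or exponential) on one common probability space, fix a scale $\lambda$, and set $\tau_x:=\arg\min_{t\in T}[\lambda\,d(x,t)+Z_t]$. Since the sample $\{Z_t\}$ is shared across all $x$, $\tau_x$ is automatically coupled to $\tau_y$. For the transport estimate, a switch of the argmin between $x$ and $y$ requires two perturbed scores to cross order; under a Gumbel tail calculation the probability of such a swap is controlled by $\lambda\,d(x,y)$, and summing over the $n$ candidate winners produces the $\log n$.

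The main obstacle is the interplay between the two demands $\mu_t=\delta_t$ and the quadratic transport bound, which pull in opposite directions. Enforcing $\tau_t\equiv t$ almost surely for $t\in T$ requires $\lambda$ to dominate the spread $\max_t Z_t-\min_t Z_t\sim\log n$ relative to the minimal pairwise distance in $T$, while the transport estimate scales unfavourably in $\lambda$. The whole proof boils down to this calibration, and the square-root of $\log n$ is precisely the price one pays. One viable resolution is to condition on the high-probability event that the $Z_t$'s are bounded by $C\log n$; another is to build an approximate extension first and then repair the values on $T$ by a small McShane--Whitney correction, which by the Hilbertian identity above costs only a constant factor. Everything else in the argument is soft.
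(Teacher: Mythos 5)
Your Jensen reduction is fine as far as it goes: if you could produce measures $\mu_x$ with $\mu_t=\delta_t$ for $t\in T$ together with couplings satisfying $\E_{\pi_{xy}}[d(\tau,\tau')^2]\lesssim \log n\cdot d(x,y)^2$, the theorem would follow. But that transport bound is exactly where the proof is missing, and your construction provably does not supply it. Take $T=\{s,t\}$ with $d(s,t)=D$, let $x$ satisfy $d(x,s)=d(x,t)=D$, and let $y$ satisfy $d(x,y)=\varepsilon$, $d(y,s)=D$, $d(y,t)=D-\varepsilon$ (a legitimate metric). For any fixed $\lambda>0$ the laws of $\tau_x$ and $\tau_y$ differ in total variation by order $\min\{1,\lambda\varepsilon\}$, and the only way to transport that excess mass is from $s$ to $t$, so \emph{every} coupling --- not just the shared-noise one --- has $\E[d(\tau_x,\tau_y)^2]\gtrsim \lambda\varepsilon D^2$, which is not $O(\varepsilon^2\log n)$ as $\varepsilon\to 0$. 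The quadratic transport bound forces the mass displaced across distance $D$ to be $O(d(x,y)^2/D^2)$, i.e.\ quadratic in $d(x,y)$, whereas a single-scale perturbed argmin moves mass linearly in $d(x,y)$; no calibration of $\lambda$ repairs this, and making $\lambda$ depend on the scale of $d(x,T)$ turns the proposal into a multi-scale random-partition argument, which is a genuinely different and harder theorem. Your patches for the consistency condition also do not close: conditioning on $\max_t Z_t\le C\log n$ still forces $\lambda\gtrsim \log n/\min_{s\ne t}d(s,t)$, which can be arbitrarily large compared to the scales at which the transport bound must hold; and ``repairing'' the values on $T$ by a McShane--Whitney correction is circular, since the discrepancy $f-\widetilde F|_T$ is again an $\mathcal H$-valued map on $T$ that needs to be extended.

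There is also a structural warning sign: your argument never uses that $\mathcal H$ is a Hilbert space, since $\|\E_\pi[f(\tau)-f(\tau')]\|^2\le \E_\pi\|f(\tau)-f(\tau')\|^2$ holds in every Banach space. If the plan went through it would yield $O(\sqrt{\log n})$-Lipschitz extension into \emph{arbitrary} Banach targets, improving the best known general bound $O(\log n/\log\log n)$ of Lee and Naor --- a well-known open problem. This strongly suggests that the step you describe as a calibration issue, and the surrounding steps you call soft, contain the entire difficulty. The paper's proof is genuinely different and uses Hilbertian structure three times: it realizes $\R^n$ isometrically inside $L_2(\Omega)$ via Gaussians, extends each scalar coordinate $u_\omega$ by McShane with no loss in the Lipschitz constant, and projects back orthogonally onto $k(\R^n)$; the $\sqrt{\log n}$ then comes from the elementary bound $\E[\max_{s\ne t}G_{f(s)-f(t)}^2]\lesssim\log n$ for at most $n^2$ standard Gaussians, not from a transport estimate.
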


As was pointed out in~\cite{MN06-ball}, Theorem~\ref{thm:JL} also follows from Ball's 
extension theorem~\cite{Ball} used in conjunction with the Markov type~2 estimates in~\cite{NPSS06}.
The proof of Theorem~\ref{thm:JL} in~\cite{JL84} introduced the celebrated 
Johnson--Lindenstrauss lemma about dimension reduction of finite point sets in Hilbert space. 
In this note we give a simple argument which is a reinterpretation of the original  from~\cite{JL84}
that uses neither dimension reduction nor the Kirszbraun extension theorem.

\begin{proof}[Proof of Theorem~\ref{thm:JL}]
    We may assume without loss of generality that $\mathcal H=\R^n$, i.e.,
    $f:T\to \R^n$.  Furthermore, assume that by rescaling $\Lip(f)=1$. 
    Let $g_1,\ldots, g_n$ be i.i.d. standard Gaussian random variables defined on a probability space $\Omega$, and consider the real-valued Hilbert space 
    $H=L_2(\Omega)$.

    Consider the linear mapping $k:\R^n \to H$ defined by
    \( k(x_1,\ldots,x_n)= \sum_{i=1}^n x_i g_i.
    \)
It is a property of the Gaussian distribution that if $g_1,\ldots,g_n$ are independent standard Gaussian variables, then $\sum_{i=1}^n x_ig_i$ is a centered Gaussian random variable with variance $\sum_{i=1}^nx_i^2$, which means that
$\|\sum_i x_i g_i\|_{L_2(\Omega)}=\|(x_1,\ldots,x_n)\|_2$.
Hence $k$ is an isometric
    isomorphism of $\R^n$ in $k(\R^n) \subset H$.
    Thus, $u:=k\circ f:T \to k(\R^n)$ is also 1-Lipschitz.
    We will use the notation $u=(u_\omega)_{\omega\in \Omega}$, and $k=(k_\omega)_{\omega\in\Omega}$.
    
    Fix $\omega\in \Omega$. 
    Since $u_\omega:T\to \R$,
    by the McShane extension theorem (also known as the nonlinear Hahn--Banach theorem), 
    $u_\omega$ can be extended to  $U_\omega :X\to \mathbb \R$ 
    such that $U_\omega|_T=u_\omega$, and
    $\Lip(U_\omega)=\Lip(u_\omega)$.
    The McShane extension is a simple result. 
    For completeness, we provide a quick proof at the end of this note, see also~\cite[Theorem~1.33]{Weaver-Lip}.
    We thus obtain $U:=(U_\omega)_{\omega\in\Omega}:X\to H$, an extension of $u$.
    Using the orthogonal projection $P:H\to k(\mathbb \R^n)$,    
    we see that $F:=k^{-1}\circ P\circ U:X\to \R^n$ is an extension of $f$. Since 
    $\Lip(k^{-1})=\Lip(P)=1$, we have $\Lip(F) \le \Lip(U)$.
    We next bound $\Lip(U)$. For $x,y\in X$:
    \begin{align}
\nonumber    \| U(x)-U(y)\|_{L_2(\Omega)} &=
    \Bigl (\E_\Omega \bigl [|U_\omega(x)-U_\omega(y)|^2 \bigr ]\Bigr )^{1/2} \\
\nonumber    &\le d(x,y) \Bigl (\E_\Omega [\Lip(U_\omega)^2]\Bigr )^{1/2} \\
\nonumber    & = d(x,y) \Bigl (\E_\Omega [\Lip(u_\omega)^2]\Bigr )^{1/2} \\
\nonumber & =
d(x,y) \left ( 
\E_\Omega \max_{s\ne t\in T} \Bigl(\frac{\sum_i f(s)_i g_i(\omega)- \sum_i f(t)_i g_i(\omega)}{d(s,t)}\Bigr)^2
\right)^{1/2}
\\ 
\nonumber &\le d(x,y)\biggl ( 
\E_\Omega \max_{s\ne t\in T} \Bigl(\sum_{i=1}^n \frac{ f(s)_i-f(t)_i}{\|f(s)-f(t)\|_2} g_i(\omega)\Bigr)^2
\biggr)^{1/2}
\\ \label{eq:max-gauss} &= d(x,y)\Bigl ( 
\E_\Omega \max_{ s\ne t\in T}  G_{f(s)-f(t)}^2 \Bigr)^{1/2},
 \end{align}
where $G_{z}:= \sum_i \frac{z_i}{\|z\|_2} g_i$. $\{G_{f(s)-f(t)}\}_{s,t\in T}$ are standard Gaussians (but not necessarily stochastically independent).
It is a classical and elementary fact that for $m\ge 2$ standard Gaussians $G_1,\ldots, G_m$, with any dependence structure, 
\begin{equation} \label{eq:max-chi-sqaured}
\E [\max\{G_1^2,\ldots, G_m^2\}] \lesssim  \log m.
\end{equation} 
For completeness, we provide a quick proof of~\eqref{eq:max-chi-sqaured} below.
Since $|T|=n$, the maximum  in~\eqref{eq:max-gauss} is over at most $n^2$ squared Gaussians, and therefore~\eqref{eq:max-gauss} is bounded above by a constant times $d(x,y) \cdot \sqrt{\log n}$.
\end{proof}

\begin{proof}[Proof of~\eqref{eq:max-chi-sqaured}]
    By a classical Gaussian tail bound, $\Pr[G_i\ge t] \le e^{-t^2/2}$, and therefore $\Pr[G_i^2\ge t] \le 2e^{-t/2}$.  By the union bound,
$\Pr[ \max\{G_1^2,\ldots,G_m^2\}\ge t]\le 2me^{-t/2}$. 
For any $s\ge 0$, we can estimate $\E[\max\{G_1^2,\ldots,G_m^2\}]$ 
by partitioning the integral up to parameter $s\ge 0$ and from $s$ to $\infty$ as follows:
\begin{multline*}
\E[\max\{G_1^2,\ldots,G_m^2\}] \le s+ \int_s^\infty \Pr \left [\max\{G_1^2,\ldots,G_m^2\} \ge t\right ] dt\\
 \le s+ \int_s^\infty 2m e^{-t/2}dt
 = s+4m e^{-s/2}.
\end{multline*}
Substituting $s=2\log m$ (and noting that $m\ge2$) 
proves~\eqref{eq:max-chi-sqaured}. 
\end{proof}

\begin{proof}[Proof of McShane extension for finite domains]
    Let $T\subset X$,  $T$ finite, and $q:T\to \mathbb R$ $L$-Lipschitz.
    Define $Q:X\to \mathbb R$ by
    \( Q(x) =\min_{t\in T} q(t)+ L d_X(x,t) \).
It is left to the reader to check that $Q|_T=q$  using the Lipschitz condition on $q$.
    Next we prove the Lipshitz condition for $Q$. Fix $x,y\in X$, and let $t\in T$
    satisfy $Q(x)=q(t)+L d_X(x,t)$. Applying the definition of $Q(y)$, and the triangle inequality in $X$, we have
    \[ Q(y)\le q(t)+Ld_X(y,t) \le q(t)+Ld_X(x,t)+Ld_X(x,y)= Q(x)+ Ld_X(x,y).
    \]
    Hence $Q(y)-Q(x)\le Ld_X(x,y)$. By symmetry, we also have
    $Q(x)-Q(y)\le Ld(x,y)$.   
\end{proof}

\medskip 

\paragraph{Acknowledgments.}
    The author thanks  Assaf Naor and the anonymous referees for comments and suggestions that considerably improved the presentation.


\begin{thebibliography}{1}
    
    \bibitem{Ball}
    Ball,~K. (1992).
    \newblock Markov chains, {R}iesz transforms and {L}ipschitz maps.
    \newblock {\em Geom. Funct. Anal.} 2(2): 137--172.
    \newblock  {doi.org/10.1007/BF01896971}
    
    \bibitem{JL84}
    Johnson,~W.~B.,  Lindenstrauss~ J., (1984).
    \newblock Extensions of {L}ipschitz mappings into a {H}ilbert space.
    \newblock In Beals, R., Beck, A., Bellow, A., Hajian, eds., {\em Conference in Modern Analysis and Probability ({N}ew {H}aven,
        {C}onn.,1982)}. Contemporary Mathematics, 26. Providence, RI: American Mathematical Society, pp. 189--206.
    \newblock 
    {doi.org/10.1090/conm/026/737400}
    
    \bibitem{MN06-ball}
    Mendel,~M., Naor, A. (2006).
    \newblock Some applications of {B}all's extension theorem.
    \newblock {\em Proc. Amer. Math. Soc.} 134(9): 2577--2584.
    {doi.org/10.1090/S0002-9939-06-08298-0}
    
    \bibitem{NPSS06}
    Naor,~A., Peres,~Y., Schramm,~O., Sheffield, S. (2006).
    \newblock Markov chains in smooth {B}anach spaces and {G}romov-hyperbolic
    metric spaces.
    \newblock {\em Duke Math. J.}, 134(1): 165--197, 2006.
    \newblock 
    {doi.org/10.1215/S0012-7094-06-13415-4}
    
    \bibitem{Weaver-Lip}
    Weaver,~N. (2018).
    \newblock {\em Lipschitz Algebras}, 2nd ed. Hackensack, NJ: 
    \newblock World Scientific Publishing Co. 
    
\end{thebibliography}

\def\polhk#1{\setbox0=\hbox{#1}{\ooalign{\hidewidth
            \lower1.5ex\hbox{`}\hidewidth\crcr\unhbox0}}}

\end{document}